\newtheorem{theorem}{Theorem}[section]
\newtheorem{corollary}[theorem]{Corollary}
\newtheorem{proposition}[theorem]{Proposition}
\theoremstyle{definition}
\mathchardef\ordinarycolon\mathcode`\:
\def\R{\mathbb{R}}
\renewcommand{\R}{\mathbb{R}}
\newcommand{\rr}{\mathbb{R}}
\newcommand{\Ac}{\mathcal{A}}
\newcommand{\Dc}{\mathcal{D}}
\newcommand{\Ec}{\mathcal{E}}
\newcommand{\Fc}{\mathcal{F}}
\newcommand{\Mc}{\mathcal{M}}
\newcommand{\Qc}{\mathcal{Q}}
\newcommand{\Sc}{\mathcal{S}}
\newcommand{\WF}{\mathrm{WF}}                         
\newcommand{\norm}[1]{\left\lVert#1\right\rVert}      
\newcommand{\abs}[1]{\left|#1\right|}                 
\newcommand{\paren}[1]{\left(#1\right)}               
\newcommand{\bparen}[1]{\left[#1\right]}               
\newcommand{\sparen}[1]{\left\{#1\right\}}		      
\renewcommand{\d}{\,\mathrm{d}}						  
\DeclareMathOperator*{\supp}{\mathrm{supp}}           
\renewcommand{\epsilon}{\varepsilon}
\renewcommand{\rho}{\varrho}
\newcommand{\vp}{\varphi}
\newcommand{\thperp}{{\theta^\bot}}
\renewcommand{\tilde}{\widetilde}
\newcommand{\rtwo}{{{\mathbb R}^2}}
\newcommand{\rn}{{{\mathbb R}^n}}
\newcommand{\st}{\hskip 0.3mm : \hskip 0.3mm}
\renewcommand{\th}{\theta}
\newcommand{\be}{\begin{equation}}
\newcommand{\ee}{\end{equation}}
\newcommand{\bea}{\begin{eqnarray}}
\newcommand{\eea}{\end{eqnarray}}
\newcommand{\bean}{\begin{eqnarray*}}
\newcommand{\eean}{\end{eqnarray*}}
\newcommand{\bel}[1]{\begin{equation}\label{#1}}
\newcommand{\eel}[1]{{\label{#1}\end{equation}}}
\newcommand{\inv}{^{-1}}
\newcommand{\sxr}{{S^1\times \mathbb{R}}}
\newcommand{\smo}{\setminus\boldsymbol{0}}
\newcommand{\xio}{{\xi_0}}
\newcommand{\chiab}{\chi_{{(a,b)}}}
\newcommand{\chia}{\chi_{A}}
\newcommand{\dx}{\mathbf{dx}}
\newcommand{\dr}{\mathbf{dr}}
\newcommand{\ds}{\mathbf{ds}}
\newcommand{\dphi}{\mathbf{d\phi}}
\newcommand{\Om}{\Omega}
\newcommand{\Vc}{\mathcal{V}}
\newcommand{\Vab}{\mathcal{V}_{(a,b)}}
\newcommand{\phio}{\phi_0}
\newcommand{\phion}{\phi_1}
\newcommand{\lamo}{\lambda_0}
\newcommand{\lamon}{\lambda_1}
\newcommand{\Wab}{W_{\{a,b\}}}
\newcommand{\Rmu}{R_\mu}
\newcommand{\Rmuab}{R_{\mu,(a,b)}}
\newcommand{\Rstm}{R^*_\mu}
\newcommand{\Kvp}{\mathcal{K}_\varphi}
\newcommand{\Lvp}{\mathcal{L}_{\varphi}}
\newcommand{\Ma}{\mathcal{M}_\mathrm{A}}
\newcommand{\Mst}{\mathcal{M}^\ast}
\newcommand{\tx}{\tilde{x}}
\title{A paradigm for the characterization of artifacts in tomography} 
\author{J\"urgen Frikel\footnotemark[1] \and  Eric Todd  Quinto\footnotemark[2]}
\date{}
\begin{document}
\maketitle

\renewcommand{\thefootnote}{\fnsymbol{footnote}}
\footnotetext[1]{Zentrum Mathematik - M6, Technische Universit\"at M\"unchen, Germany and Institute of Computational Biology, Helmholtz Zentrum M\"unchen, Germany, \textbf{Email:}~\texttt{juergen.frikel@helmholtz-muenchen.de}}
\footnotetext[2]{Department of Mathematics, Tufts University, Medford, MA 02155, USA; \textbf{Email:}~\texttt{todd.quinto@tufts.edu}}

\begin{abstract}
We present a paradigm for characterization of artifacts in limited
data tomography problems. In particular, we use this paradigm to
characte\-rize artifacts that are generated in reconstructions from
limited angle data with generalized Radon transforms and general
filtered backprojection type operators. In order to find when visible
singularities are imaged, we calculate the symbol of our
reconstruction operator as a pseudodifferential operator.

\bigskip
\emph{Keywords:} 
Computed Tomography, Lambda Tomography, Limited Angle Tomography, Radon transforms, Microlocal Analysis, Fourier integral operators.

\end{abstract}

\section{Introduction}

In this article, we consider the generalized Radon transform
integrating over lines in the plane.  Let $s\in \rr$,
$\phi\in[0,2\pi]$ and $\th(\phi) = (\cos(\phi),\sin(\phi))$ be the
unit vector in $S^1$ in direction $\phi$ and $\thperp(\phi)=(-\sin
(\phi),\cos(\phi))$, then $\thperp(\phi)$ is perpendicular to
$\th(\phi)$.  Let $\Xi = [0,2\pi]\times \rr$, then for each
$(\phi,s)\in \Xi$,  $L(\phi,s) = \sparen{x\in \rtwo\st
x\cdot\th(\phi) = s}$ is the line containing $s\th(\phi)$ and normal
to $\th(\phi)$.  We let $\mu(\phi,x)$ be a smooth function on
$\rr\times \rtwo$ that is $2\pi-$periodic in $\phi$.  Then, we define
the generalized Radon transform \bel{def:Rmu} \Rmu f(\phi,s)
=\int_{x\in L(\phi,s)}f(x)\mu(\phi,x)\,\d x \ee where $\d x$ denotes
the arc length measure on the line.  This transform integrates
functions along lines.

We define the corresponding dual transform (or the backprojection
operator) for $g\in \Sc(\sxr)$ as \bel{def:Rstar} \Rstm g(x)
=\int_0^{2\pi} g(\phi,x\cdot\th(\phi))\mu(\phi,x)\d\phi, \ee which is
the integral of $g$ over all lines through $x$ (since for each
$\th(\phi)$, $x\in L(\phi,x\cdot \th(\phi))$).  Note that authors,
including Beylkin and others, use the weight $1/\mu$ for a different
weighted dual operator. These transforms are both defined and weakly
continuous for classes of distributions \cite{He:RT2011}.  Many
inversion formulas have been proven for the classical Radon transform
($\mu\equiv 1$) \cite{Natterer86}, and invertibility of $\Rmu$ has
been well studied (e.g., \cite{Beylkin, 
Novikov:att-inversion,Quinto1980}). Among the most prominent
reconstruction formulas are those of filtered backprojection type
\cite{Beylkin,Natterer86,Kun:sphere} which have the following
 form 
\begin{equation}
\label{eq:reconstruction operators}
	B g=\Rstm P g, \ \text{ where }\  g=\Rmu f, 
\end{equation}
and $P$ is a pseudodifferential operator that \enquote{filters} the
data $g=\Rmu f$. For example, in case of the classical Radon transform
and $P=(1/4\pi)\sqrt{-\partial^2/\partial s^2}$,
\eqref{eq:reconstruction operators} is an exact reconstruction formula
and the basis for the filtered backprojection (FBP) algorithm
\cite{Natterer86}. Another prominent example is the so-called Lambda
reconstruction formula which uses the filter
$P=(1/4\pi)(-\partial^2/\partial s^2)$ in \eqref{eq:reconstruction
operators}.  

In this paper, we consider the problem of reconstructing $f$ from
incomplete data. More precisely, we assume that $\Rmu f(\phi,s)$ is
known only for a limited angular range $\phi\in(a,b)$ (note that for
$b-a\geq \pi$, every line can be parameterized by $\phi\in (a,b)$ although
for general $\mu$, the measure might be different on the line
$L(\phi,s)$ and $L(\phi+\pi,-s)$--$\mu(\phi,x)$ might not equal
$\mu(\phi+\pi,x)$ for all $(\phi,x)$). Thus, we deal with the restricted
(or limited angle) generalized Radon transform which we define as 
\begin{equation}
\label{eq:restricted Radon transform}
	\Rmuab f(\phi,s) = \chi_{(a,b)\times\R}(\phi,s)\cdot\Rmu f(\phi,s),
\end{equation}
where $\chi_{(a,b)\times\R}$ denotes the characteristic function of the
data space $(a,b)\times\R$ with the limited angular range $(a,b)$ with
$b-a<\pi$ (or $b-a<2\pi$ if $\mu$ is not symmetric.  Such limited angle
problems arise in many practical situations and the filtered backprojection
type reconstruction of the form \eqref{eq:reconstruction operators} is
still one of the preferred reconstruction methods
\cite{SidkyPanVannier2009}. It is well known that in this situation only
visible singularities can be reconstructed reliably \cite{Quinto93} and
that the reconstruction problem is severely ill-posed
\cite{Lo1986,Natterer86}. Moreover, it has been shown in
\cite{FrikelQuinto2013,Katsevich:1997} that additional artifacts can be
generated. In \cite{FrikelQuinto2013,Katsevich:1997}, the authors consider
the limited angle FBP and Lambda reconstructions for the classical limited
angle tomography data $g_{(a,b)}= R_{(a,b)} f$ (i.e.  $\mu\equiv 1$ and
$P=\sqrt{-d^2/ds^2}$ for FBP and $P=-d^2/ds^2$ for Lambda) and derive a
precise geometric characterization of artifacts. In particular, they show
artifacts are generated along straight lines that are tangent to
singularities of $f$ whose directions correspond to the ends of the angular
range.  In \cite{LNguyen1}, L. Nguyen characterized the strength of these
artifacts.  In \cite{FrikelQuinto2013, Katsevich:1997} the authors prove
that a simple artifact reduction strategy smooths the artifacts.  The same
reduction strategy is proposed in \cite{KLM} for $\Rmu$ and the Lambda and
FBP filter, and the symbols are calculated for those specific operators for
limited angle and ROI data.

The methods of \cite{FrikelQuinto2013,Katsevich:1997, LNguyen1} do not
directly apply to the limited angle problem for the generalized Radon
transform with reconstruction operators \eqref{eq:reconstruction
operators} (with $P$ being an arbitrary pseudodifferential operator).
This is mainly due to the fact that their proofs rely on explicit
expressions of the operators as singular pseudodifferential operators.

In this paper, we study the application of the reconstruction
operators \eqref{eq:reconstruction operators} to the limited angle
data for an arbitrary $\mu$ which is smooth and nowhere zero. Using
the framework of microlocal analysis and the calculus of Fourier
integral operators, we prove a qualitative characterization of
artifacts and provide an artifact reduction strategy. Our proofs use
the technique that was originally developed in \cite{FrikelQuinto2014}
to characterize artifacts in photoacoustic tomography and sonar. In
particular, we show that the visible and added singularities are
contained in the same set as was obtained for specific cases in
\cite{FrikelQuinto2013, Katsevich:1997}.  We show that the artifact
reduction strategy in \cite{FrikelQuinto2013, Katsevich:1997,KLM}
applies for general filters $P$ (Theorem \ref{thm:reduction1}) and we
show for some choices of $P$ that most of the visible singularities are
recovered by the artifact reduced reconstruction operator (Corollary
\ref{cor:reduction2}).  

The rest of the article is organized as follows. Basic definitions and notations
are given in Section \ref{sec:notation}. In Section \ref{sec:paradigm} we present a general paradigm to
characterize added singularities in limited view tomography. The
characterization of limited angle artifacts for the generalized Radon
transform is proven in Section \ref{sec:characterization}, and the
artifact reduction strategy and symbol calculations are presented in
Section \ref{sec:reduction}.

\section{Notation}
\label{sec:notation}

Let $\Om$ be an open set.  We denote the set of $C^\infty$ functions
with domain $\Om$, by $\Ec(\Om)$ and the set of $C^\infty$ functions
of compact support in $\Om$ by $\Dc(\Om)$.  Distributions are
continuous linear functionals on these function spaces.  The dual
space to $\Dc(\Om)$ is denoted $\Dc'(\Om)$ and the dual space to
$\Ec(\Om)$ is denoted $\Ec'(\Om)$. In fact, $\Ec'(\Om)$ is the set of
distributions of compact support in $\Om$.  For more information
about these spaces we refer to \cite{Rudin:FA}.

We will use the framework of microlocal analysis for our
characterizations. Here, the notion of a wavefront set of a
distribution $f\in\Dc'(\Om)$ is central. It simulta\-neously describes
the locations and directions of singularities of $f$. That is, $f$ has
a singularity at $x_0\in\Om$ in direction $\xio\in \rn\smo$ if for any
cutoff function $\vp$ at $x_0$, the Fourier transform $\Fc(\vp f)$
does not decay rapidly in any open conic neighborhood of the ray
$\{t\xio\st t>0\}$.  Then, the \emph{wavefront set} of
$f\in\Dc'(\Om)$, $\WF(f)$, is defined as the set of all tuples
$(x_0,\xio)$ such that $f$ is singular at $x_0$ in direction $\xio$.
As defined, $\WF(f)$, is a closed subset of $\rn\times(\rn\smo)$ that
is conic in the second variable. However, in what follows, we will
view the wavefront set as a subset of a cotangent bundle so it will be
invariantly defined on manifolds \cite{Treves:1980vf}.

We recall that, for a manifold $\Xi$ and $y\in \Xi$, the cotangent
space of $\Xi$ at $y$, $T^*_y(\Xi)$ is the vector space of all first
order differentials (the dual space to the tangent space $T_y(\Xi)$),
and the cotangent bundle $T^*(\Xi)$ is the vector bundle with fiber
$T^*_y(\Xi)$ above $y\in \Xi$.  That is $T^*(\Xi)=\sparen{(y,\eta)\st
y\in \Xi, \eta\in T^*_y(\Xi)}$. The differentials $\dx_1$,
$\dx_2,\dots$, and $\dx_n$ are a basis of $T^*_x(\rn)$ for any $x\in
\rn$.  For $\xi\in \rn$, we will use the notation \[\xi\dx =
\xi_1\dx_1+\xi_2\dx_2+\cdots+\xi_n\dx_n\in T^*_x(\rn).\] If $\phi\in
\rr$ then $\dphi$ will be the differential with respect to $\phi$, and differentials
$\dr$ and $\ds$ are defined analogously.  \

Let $X$ and $Y$ be manifolds, and $C\subset T^\ast (Y)\times T^\ast (X)$,
then
\begin{equation}
	C^t = \sparen{(x,\xi;y,\eta)\st  (y,\eta;x,\xi)\in
	C}.\label{def:At}
\end{equation}
If $D\subset T^*(X) $, we define
\bel{def:circ}
	C\circ D = \sparen{(y,\eta)\in T^\ast(Y)\st  \exists (x,\xi)\in
	D\st (y,\eta;x,\xi)\in C}.
\ee

Fourier integral operators (FIO) are linear operators on distribution
spaces that precisely transform wavefront sets.  They are defined in
\cite{Ho1971, Treves:1980vf} in terms of amplitudes and phase
functions.  If $X$ and $\Xi$ are manifolds and  $\Fc:\Dc'(X)\to
\Dc'(\Xi)$ is a FIO, then associated to $\Fc$ is the \emph{canonical
relation}  $C\subset T^*(\Xi)\times T^*(X)$.  Then the
H\"ormander-Sato Lemma (e.g., 
\cite[Th.\ 5.4, p.\
461]{Treves:1980vf}) asserts for $f\in \Ec'(X)$ that 
\begin{equation}
\label{thm:HS}
	\WF(\Fc f)\subset C\circ\WF(f).
\end{equation}

\section{The paradigm} \label{sec:paradigm} 

In this section, we will present a methodology that can be used to
prove characterizations of limited view artifacts for a number of
tomography problems.  In the next section, we will apply them to
$\Rmu$.

This methodology was originally developed in \cite{FrikelQuinto2014}
to understand visible and added singularities in limited data
photoacoustic tomography and sonar.  Denote the forward operator by
$\Mc:\Ec'(\Omega)\to\Ec'(\Xi)$ and assume $\Mc$ is a FIO.  The
\emph{object space} $\Omega$ is a region to be imaged and the
\emph{data space} $\Xi$ is a space that parameterizes the data.  A
\emph{limited data problem} for $\Mc$ will be a specification of an
open subset $A\subset \Xi$ on which data are given, and in this case,
the limited data operator can be written \bel{MA} \Ma f = \chia
\Mc,\ee where $\chia$ is the characteristic function of $A$ and the
product just restricts the data to the set $A$.  In the cases we
consider, the reconstruction operator is of the form
\bel{MA-reconstruction}\Mst P\Ma,\ee where $\Mst$ is an appropriate
dual or backprojection operator to $\Mc$, and this models our
reconstruction operator \eqref{eq:reconstruction operators}.

 Our next theorem tells what
multiplication by $\chia$ does to the wavefront set.  It is a special case
of Theorem 8.2.10 in \cite{Hoermander03}.

 \begin{theorem}\label{thm:WF mult} Let $u$ be a distribution and let
$A$ be a closed subset of $\,\Xi$ with nontrivial interior.  If the
\emph{non-cancellation condition} \bel{non-cancellation}\text{\rm\
$\forall\,(y,\xi)\in T^*(\Xi)$, }\ (y,\xi)\in \WF(u) \text{\rm\ iff }
(y,-\xi)\notin \WF(\chia)\ee holds, then the product $\chia u$ can be
defined as a distribution.  In this case, we have \bel{WF of a
product}\WF(\chia u) \subset \Qc(A,\WF(u)),\ee where for $A\subset \Xi$
and $W\subset T^*(\Xi)$ \bel{def:Q}\begin{aligned}\Qc(A,W) :=&
\big\{(y,\xi+\eta)\st y\in A\,, \bparen{(y,\xi)\in W\text{\rm\ or }
\xi = 0}\\&\qquad
\text{\rm\  and } \big[(y,\eta)\in \WF(\chia)\text{\rm\  or } \eta =
0\big]\big\}\,.\end{aligned}\ee
\end{theorem}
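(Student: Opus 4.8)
The plan is to obtain this statement as a direct consequence of H\"ormander's theorem on the product of two distributions --- Theorem~8.2.10 in \cite{Hoermander03} --- applied to the pair $u$ and $v:=\chia$. That theorem states that if $u_1,u_2\in\Dc'(\Xi)$ are such that there is no $(y,\xi)\in T^*(\Xi)$ with $(y,\xi)\in\WF(u_1)$ and $(y,-\xi)\in\WF(u_2)$, then the product $u_1u_2$ is well defined as a distribution and
\begin{equation*}
\begin{aligned}
\WF(u_1u_2)\subset\big\{(y,\xi+\eta)\st\ &\bparen{(y,\xi)\in\WF(u_1)\text{ or }\xi=0}\\
&\text{and }\bparen{(y,\eta)\in\WF(u_2)\text{ or }\eta=0}\big\}.
\end{aligned}
\end{equation*}
This is stated for distributions on open subsets of $\R^n$, but since the construction of the product and the wavefront inclusion are both local and invariant under diffeomorphisms, I would first record that it carries over verbatim to the manifold $\Xi$ by working in coordinate charts.

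Next I would check the hypotheses with $u_1=u$ and $u_2=\chia$. Because $A$ is closed, hence measurable, $\chia\in L^1_{\mathrm{loc}}(\Xi)\subset\Dc'(\Xi)$, so $\WF(\chia)$ is defined. H\"ormander's no-cancellation hypothesis --- that there is no $(y,\xi)$ with $(y,\xi)\in\WF(u)$ and $(y,-\xi)\in\WF(\chia)$ --- is precisely the implication \enquote{$(y,\xi)\in\WF(u)\Rightarrow(y,-\xi)\notin\WF(\chia)$} contained in the non-cancellation condition \eqref{non-cancellation}. Hence the theorem applies, so $\chia u$ is a well-defined distribution and satisfies the displayed wavefront inclusion with $u_1=u$, $u_2=\chia$.

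It then remains to insert the base-point restriction $y\in A$, which is part of the definition \eqref{def:Q} of $\Qc(A,W)$ but absent from H\"ormander's generic formula. Since $A$ is closed, $\chia$ vanishes as a distribution on the open set $\Xi\setminus A$, hence so does $\chia u$; therefore $\supp(\chia u)\subset A$, and since the wavefront set of a distribution projects into its support, every $(y,\zeta)\in\WF(\chia u)$ has $y\in A$. Intersecting the wavefront inclusion from the previous paragraph with this constraint gives exactly $\WF(\chia u)\subset\Qc(A,\WF(u))$, which is \eqref{WF of a product}.

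I do not expect a substantive obstacle; the argument is essentially a translation of H\"ormander's product theorem into the notation of this paper. The two points that require a little care are (i) confirming that the non-cancellation condition \eqref{non-cancellation} is exactly the hypothesis under which H\"ormander's theorem guarantees the product $\chia u$ is defined, and (ii) supplying the elementary support inclusion $\supp(\chia u)\subset A$ that produces the restriction $y\in A$ in $\Qc$. The assumption that $A$ has nontrivial interior plays no role in defining $\chia u$ or in deriving \eqref{WF of a product}; it serves only to keep the statement from being degenerate.
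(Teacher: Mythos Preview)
Your proposal is correct and matches the paper's own treatment: the paper does not give a standalone proof but simply states that the theorem is a special case of H\"ormander's Theorem~8.2.10 in \cite{Hoermander03}, and in the remark following the statement notes that the extra condition $y\in A$ is permitted because $\chia$ is zero (hence smooth) off of $A$. Your write-up expands this into exactly the two steps the paper indicates---invoking H\"ormander's product theorem and then restricting base points to $A$ via the support inclusion---so there is nothing to add or correct.
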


Note that the condition ``$y\in A$'' is not in \eqref{def:Q} in
H\"{o}rmander's theorem, but we  include this  because
$\chia$ is zero (and so smooth) off of $A$.  Also, note that the case
$\xi=\eta = 0$ in the definition of $\Qc$ is not allowed since the
wavefront set does not include zero vectors.

Our paradigm for proving characterizations for visible and added artifacts is given by the following procedure, cf. \cite{FrikelQuinto2014}:
\begin{enumerate}[label*=(\alph*)]

\item\label{paradigm:step1} Confirm the forward operator $\Mc$ is a
FIO and calculate its canonical relation, $C$.

\item\label{paradigm:step2} Choose the limited data set $A\subset \Xi$ and calculate
$\WF(\chia)$.

\item\label{paradigm:step3} Make sure the non-cancellation condition
\eqref{non-cancellation} holds for $\chi_A$ and $\Mc f$.  This can be
done in general by making sure it holds for $(y,\eta)\in C\circ
\paren{T^*(\Om)\smo}$. 

\item\label{paradigm:step4} Calculate $\Qc(A,C\circ \WF(f))$.

\item\label{paradigm:step5} Calculate $C^t\circ\Qc\paren{A,
C\circ\WF(f)}$ to find possible visible singularities and added
artifacts using \cite[Lemma 3.2]{FrikelQuinto2014}: \bel{final WF
containment}\WF(\Mst P \Ma f)\subset C^t\circ\Qc\paren{A,
C\circ\WF(f)}.\ee

\end{enumerate}

\section{Characterization of artifacts} \label{sec:characterization}
The first proposition provides the microlocal properties of $\Rmu$ and
$\Rstm$.

\begin{proposition}
\label{prop:canonical relations} If $\mu$ is nowhere zero, then the
generalized Radon transform $\Rmu$ is an elliptic Fourier integral
operator associated to the canonical relation \begin{multline}
\label{def:C}
		C = \{
	((\phi,s),\alpha\bparen{-x\cdot\th^\bot(\phi)\d\phi+\d s}; 
x,\alpha\th(\phi)\d x)\st \\
		 (\phi,s)\in[0,2\pi]\times \R,\alpha\neq 0, x\in
		 L(\phi,s) \}.
	\end{multline}

 The dual operator $\Rstm$ is an elliptic Fourier integral operator
associated to the canonical relation $C^t$ defined in \eqref{def:At}.

Let $\Pi_R:C\to T^*(\rtwo)$ and $\Pi_L:C\to T^*(\Xi)$ be the natural
projections. Then $\Pi_L$ is an injective immersion and $\Pi_R$ is a
two-to-one immersion.  Let $(x,\xi\dx)\in T^*(\rtwo)\smo$.  Let
$\phio=\phio(\xi)$ be the unique angle in $[0,2\pi)$ with
$\xi=\norm{\xi}\th(\phio)$ and let $\phion=\phion(\xi)$ be the unique
angle in $[0,2\pi)$ with $\xi=-\norm{\xi}\th(\phio)$. Define
\bel{def:lambdas}\begin{aligned} \lamo(x,\xi) &=
\paren{\phio(\xi),x\cdot \th(\phio(\xi)), \norm{\xi}\bparen{-x\cdot
\thperp(\phio(\xi))\d \phi + \d s}}\\
\lamon(x,\xi) &= \paren{\phion(\xi),x\cdot \th(\phion(\xi)),
-\norm{\xi}\bparen{-x\cdot \thperp(\phion(\xi))\d \phi + \d
s}}.\end{aligned}\ee
The two preimages of $(x,\xi\dx)$ under $\Pi_R$ are
$(\lamo(x,\xi);x,\xi\dx)$ and $(\lamon(x,\xi);x,\xi\dx)$. 
Therefore, \bel{lambdas}\begin{aligned}C\circ\sparen{(x,\xi\dx)}&=
\sparen{\lamo(x,\xi),\lamon(x,\xi)}\\
C^t\circ\sparen{\lamo(x,\xi\dx)}&=
C^t\circ\sparen{\lamon(x,\xi)}=\sparen{(x,\xi\dx}.\end{aligned}\ee
\end{proposition}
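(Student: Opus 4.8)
The plan is to realize $\Rmu$ as an oscillatory integral, read its canonical relation off the phase function, and then analyze the two projections of $C$ by elementary linear algebra; the composition identities in \eqref{lambdas} then follow formally. First I would regard the data space $\Xi=[0,2\pi]\times\R$ as the boundaryless manifold $S^1\times\R$ (legitimate since $\mu$ is $2\pi$-periodic in $\phi$) and rewrite the line integral \eqref{def:Rmu} as an integral of $f$ against $\delta(s-x\cdot\th(\phi))$. Inserting the Fourier representation of $\delta$ yields
\[
	\Rmu f(\phi,s)=\frac{1}{2\pi}\int_{\R}\int_{\rtwo}e^{i\alpha(s-x\cdot\th(\phi))}\mu(\phi,x)f(x)\,\d x\,\d\alpha .
\]
Since $f\in\Ec'(\rtwo)$, one may replace $\mu$ by $\mu$ times a cutoff equal to $1$ near $\supp f$ without changing $\Rmu f$; the amplitude $\mu(\phi,x)/(2\pi)$ is then a classical symbol of order $0$ in the single phase variable $\alpha$, compactly supported in $x$, and --- the crucial point --- bounded away from zero there because $\mu$ is nowhere zero, which is exactly what gives ellipticity. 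The phase $\Phi(\phi,s,x,\alpha)=\alpha(s-x\cdot\th(\phi))$ is positively homogeneous of degree $1$ in $\alpha$, and since there is a single phase variable and $\partial_\alpha\Phi=s-x\cdot\th(\phi)$ has nonvanishing differential on $\sparen{\partial_\alpha\Phi=0}$ (its $\d s$-component is $1$), it is a nondegenerate phase. Hence this exhibits $\Rmu$ as an elliptic Fourier integral operator (of order $-1/2$).

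Next I would compute the canonical relation from $\Phi$. On the critical manifold $\Sigma_\Phi=\sparen{\partial_\alpha\Phi=0}=\sparen{s=x\cdot\th(\phi)}$ (that is, $x\in L(\phi,s)$) one has, using $\th'(\phi)=\thperp(\phi)$, that the differential of $\Phi$ in the base variables $(\phi,s)$ is the covector $\alpha\bparen{-x\cdot\thperp(\phi)\d\phi+\d s}$, while minus the $x$-gradient of $\Phi$ is $\alpha\th(\phi)\d x$; with $\alpha\neq 0$ off the zero section, the usual twisted recipe for the canonical relation of a FIO then reproduces exactly the set \eqref{def:C}. For the dual operator, a direct computation of the pairing gives $\h{\Rmu f,g}=\h{f,\Rstm g}$, so $\Rstm$ of \eqref{def:Rstar} is the formal transpose of $\Rmu$; since the transpose of an elliptic FIO with canonical relation $C$ is again an elliptic FIO, associated to $C^t$ in the sense of \eqref{def:At}, the assertion about $\Rstm$ follows.

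For the projections I would coordinatize $C$ by $(\phi,\alpha,x)\in S^1\times(\R\smo)\times\rtwo$, with $s=x\cdot\th(\phi)$ determined. Then $\Pi_R(\phi,\alpha,x)=(x,\alpha\th(\phi)\d x)$, whose differential is injective because the Jacobian of $(\phi,\alpha)\mapsto\alpha\th(\phi)$ is nonsingular for $\alpha\neq0$; hence $\Pi_R$ is an immersion. Given $(x,\xi\dx)\in T^*(\rtwo)\smo$, the equation $\alpha\th(\phi)=\xi$ has exactly the two solutions $(\phi,\alpha)=(\phio(\xi),\norm{\xi})$ and $(\phi,\alpha)=(\phion(\xi),-\norm{\xi})$; substituting these into the coordinatization (and into the $\Xi$-covector above) gives exactly the two preimages $(\lamo(x,\xi);x,\xi\dx)$ and $(\lamon(x,\xi);x,\xi\dx)$, with $\lamo,\lamon$ as in \eqref{def:lambdas}, so $\Pi_R$ is two-to-one. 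For $\Pi_L$, from the $\Xi$-component $(\phi,s,\xi_\phi\d\phi+\xi_s\d s)$ of a point of $C$ one reads off $\alpha=\xi_s\neq0$ and $x\cdot\thperp(\phi)=-\xi_\phi/\xi_s$; together with $x\cdot\th(\phi)=s$ this recovers $x=s\th(\phi)-(\xi_\phi/\xi_s)\thperp(\phi)$ uniquely, so $\Pi_L$ is injective, and the analogous Jacobian (nonsingular for $\alpha\neq0$) shows it is an immersion.

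Finally, \eqref{lambdas} follows formally: by \eqref{def:circ}, $C\circ\sparen{(x,\xi\dx)}=\sparen{\Pi_L(\lambda)\st \lambda\in C,\ \Pi_R(\lambda)=(x,\xi\dx)}$, which equals $\sparen{\lamo(x,\xi),\lamon(x,\xi)}$ by the two-to-one description above; and $C^t\circ\sparen{\lamo(x,\xi)}$ consists of the $T^*(\rtwo)$-components of the points of $C$ whose $\Xi$-component equals $\lamo(x,\xi)$, of which --- since $\Pi_L$ is injective --- there is exactly one, namely $(\lamo(x,\xi);x,\xi\dx)$, whence $C^t\circ\sparen{\lamo(x,\xi)}=\sparen{(x,\xi\dx)}$, and identically with $\lamon$ in place of $\lamo$. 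I expect the only genuine obstacle to be the sign-and-convention bookkeeping involved in identifying $C$ --- getting the oscillatory representation to reproduce \eqref{def:C} exactly and checking that $\Phi$ is a nondegenerate phase; everything else is routine linear algebra, and the computation is the classical one for the Radon transform (cf.\ \cite{Quinto1980,Natterer86}), carrying the nowhere-zero weight $\mu$ through the amplitude.
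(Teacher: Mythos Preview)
Your proof is correct and follows essentially the same approach as the paper's, only far more explicitly. The paper's own proof is a few lines long: it cites \cite{FrikelQuinto2013,GS1977} for the calculation of $C$, invokes the standard FIO calculus \cite[Theorem 4.2.1]{Ho1971} for the dual, refers to \cite{GS1977,Quinto1980} for the Bolker Assumption on $\Pi_L$, and then makes the same observation you do, namely that $C\circ A=\Pi_L\bigl(\Pi_R^{-1}(A)\bigr)$ and $C^t\circ B=\Pi_R\bigl(\Pi_L^{-1}(B)\bigr)$, from which \eqref{lambdas} follows once the two $\Pi_R$-preimages of $(x,\xi\dx)$ are identified via $\xi=\norm{\xi}\th(\phio)=-\norm{\xi}\th(\phion)$. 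What you have written is exactly the computation those references contain --- the oscillatory representation with phase $\alpha(s-x\cdot\th(\phi))$, the nondegeneracy check, and the explicit Jacobian analysis of $\Pi_L,\Pi_R$ --- so there is no genuine methodological difference, only a difference in level of detail.
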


\begin{proof}  The calculation of $C$ is well known, see e.g., 
 \cite{FrikelQuinto2013,GS1977}.  Since $\Rstm$ is the dual of
$\Rmu$, $\Rstm$ is an FIO associated to $C^t$ by the standard calculus
of FIO, e.g., \cite[Theorem 4.2.1]{Ho1971}.  That $\Pi_L:C\to
T^*(\Xi)$ is an injective immersion (The Bolker Assumption) is a
straightforward calculation \cite{GS1977, Quinto1980}.

One uses \eqref{def:C} to find the two preimages of $(x,\xi\dx)$ under
$\Pi_R:C\to T^*(\rtwo)$ using the fact that $\xi =
\norm{\xi}\th(\phio(\xi))=-\norm{\xi}\th(\phion(\xi))$.  Statement
\eqref{lambdas} follow from the observation that, if $A\subset
T^*(\rtwo)$, then $C\circ A = \Pi_L\paren{\Pi_R\inv (A)}$ (and if
$B\subset T^*(\Xi)$, then $C^t\circ B = \Pi_R\paren{\Pi_L\inv (B)}$).
\end{proof}

The next theorem provides a generalization to $\Rmu$ and arbitrary
filter $P$ of the artifact characterization in \cite{FrikelQuinto2013,
Katsevich:1997}.

\begin{theorem}
\label{thm:characterization} Let $f\in \Ec'(\rtwo)$ and let $\mu$ be a
nowhere zero smooth $2\pi-$periodic function on $\rr\times
\rtwo$. Let $P$ be a pseudodifferential operator on $\Ec'(\Xi)$ 
\begin{equation} \label{equn:characterization}
 \WF(\Rstm P\Rmuab f)\subset
\WF_{(a,b)}(f)\cup\Ac_{\{a,b\}}(f), \end{equation} where
\begin{equation}\label{def:WFab} \WF_{(a,b)}(f)=\WF(f)\cap \Vab,
  \text{\rm and}\ \Vab = \{(x,\alpha\th(\phi)\d x):
\alpha\neq0,\phi\in(a,b)\} \end{equation} is the set of visible
singularities and \begin{multline} \Ac_{(a,b)}(f) =
\{(x+t\th^\bot(\phi),\alpha\th(\phi)\d x)\st \\ \phi\in\sparen{a,b},
\alpha,t\neq0, x\in L(\phi,s), (x,\alpha\th(\phi))\in\WF(f)\}
\label{def:A}\end{multline} is the set of possible added artifacts.

 Now, assume that $\mu$ is nowhere zero and the top order symbol of
$P$ is nowhere zero modulo lower order symbols on
$\sparen{(\phi,s,\alpha[t\d \phi + \d s])\st \phi\in (a,b), s\in \rr,
t\in \rr, \alpha \neq 0}$.  Furthermore assume $b-a<\pi$.  Then,
\bel{elliptic containment} \WF_{(a,b)}(f) \subset \WF(\Rstm P\Rmuab
f).\ee
\end{theorem}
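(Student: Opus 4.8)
The plan is to prove the two inclusions in Theorem \ref{thm:characterization} separately. The first inclusion, \eqref{equn:characterization}, follows directly from the paradigm of Section \ref{sec:paradigm}. By Proposition \ref{prop:canonical relations}, $\Rmu$ is an elliptic FIO with canonical relation $C$, so step \ref{paradigm:step1} is done. For step \ref{paradigm:step2}, I take $A = [a,b]\times\R$ (a closed set with nonempty interior), and compute $\WF(\chi_A)$: since $\chi_A$ is the pullback of a characteristic function of a half-space pair, its wavefront set is the conormal bundle of the boundary lines $\{\phi=a\}\cup\{\phi=b\}$, i.e.\ $\WF(\chi_A) = \{(\phi,s,\alpha\,\d\phi)\st \phi\in\{a,b\},\ s\in\R,\ \alpha\neq 0\}$. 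Step \ref{paradigm:step3}: the non-cancellation condition \eqref{non-cancellation} holds because, by the description of $C\circ\{(x,\xi\dx)\}$ in \eqref{lambdas}, elements of $\WF(\Rmu f)$ have a nonzero $\d s$-component, whereas elements of $\WF(\chi_A)$ are purely in the $\d\phi$-direction, so they can never be antipodal. Steps \ref{paradigm:step4} and \ref{paradigm:step5} are then a bookkeeping computation: one applies $\Qc$ and then $C^t\circ(\cdot)$, using the explicit forms of $\lamo,\lamon$ and $\WF(\chi_A)$. The sum $\xi+\eta$ in \eqref{def:Q} with $\xi$ of the form $\norm{\xi}[-x\cdot\thperp(\phi)\d\phi+\d s]$ (for $\phi\in\{a,b\}$) and $\eta$ a multiple of $\d\phi$ produces covectors of the form $\alpha[t\,\d\phi+\d s]$; applying $C^t$ and tracking which base points $x+t\thperp(\phi)$ arise gives exactly $\WF_{(a,b)}(f)\cup\Ac_{\{a,b\}}(f)$. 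The case $\xi=0$ contributes nothing new beyond the artifact set, and the case $\eta=0$ gives the visible part $\WF_{(a,b)}(f)$.

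For the second inclusion \eqref{elliptic containment}, which is the more substantive part, the strategy is a microlocal ellipticity argument. Fix $(x_0,\xi_0\dx)\in\WF_{(a,b)}(f)$, so $\xi_0=\norm{\xi_0}\th(\phi_0)$ with $\phi_0\in(a,b)$. Because $\phi_0$ lies in the \emph{open} interval and $b-a<\pi$, one has $\lamo(x_0,\xi_0)\in A\times(\text{cotangent directions})$ with $\phi$-coordinate bounded away from $a$ and $b$, and—crucially—$\phi_1(\xi_0)=\phi_0+\pi\notin[a,b]$ since $b-a<\pi$. This means that only \emph{one} of the two preimages of $(x_0,\xi_0\dx)$ under $\Pi_R$, namely $\lamo(x_0,\xi_0)$, lies over the data region $A$; the other branch $\lamon(x_0,\xi_0)$ is cut off entirely. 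I would then argue that near $\lamo(x_0,\xi_0)$, multiplication by $\chi_A$ acts as the identity (microlocally), so that $\Rstm P\Rmuab$ agrees, microlocally near $(x_0,\xi_0\dx)$, with $\Rstm P\Rmu$ up to a term whose wavefront set is disjoint from $(x_0,\xi_0\dx)$. The operator $\Rstm P\Rmu$ is, by the standard FIO calculus together with the Bolker assumption ($\Pi_L$ injective immersion from Proposition \ref{prop:canonical relations}), a pseudodifferential operator on $\rtwo$; its principal symbol at $(x_0,\xi_0\dx)$ is a nonzero multiple of $\mu^2$ (from the two factors of $\Rmu,\Rstm$) times the top-order symbol of $P$ evaluated at the point $\lamo(x_0,\xi_0)$, which is of the form $(\phi_0,s_0,\norm{\xi_0}[-x_0\cdot\thperp(\phi_0)\d\phi+\d s])$. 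By hypothesis $\mu$ is nowhere zero and the top-order symbol of $P$ is nowhere zero on the relevant set, so $\Rstm P\Rmu$ is elliptic at $(x_0,\xi_0\dx)$. Ellipticity of a pseudodifferential operator preserves the wavefront set at that covector, so $(x_0,\xi_0\dx)\in\WF(\Rstm P\Rmu f)$, hence $(x_0,\xi_0\dx)\in\WF(\Rstm P\Rmuab f)$.

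The main obstacle is making the "microlocally, $\chi_A$ acts as the identity near $\lamo(x_0,\xi_0)$" step rigorous and combining it cleanly with the composition calculus. Concretely: $\Rmuab f = \chi_A\Rmu f$, and I need to control $\WF(\Rstm P(\chi_A\Rmu f))$ near $(x_0,\xi_0\dx)$. The clean way is to write $\chi_A = \chi_A\psi + \chi_A(1-\psi)$ where $\psi$ is chosen so that $\psi\equiv 1$ microlocally near the point $\lamo(x_0,\xi_0)$ in $T^*(\Xi)$ and $\supp\psi$ avoids $\WF(\chi_A)$; then $\chi_A\psi\,\Rmu f$ differs from $\psi\,\Rmu f$ by something smooth (since on $\supp\psi$, $\chi_A$ is either $\equiv 1$ or the relevant singularities cancel), while $\Rstm P$ applied to $\chi_A(1-\psi)\Rmu f$ has wavefront set, by the Hörmander–Sato lemma \eqref{thm:HS} and \eqref{WF of a product}, contained in $C^t\circ\Qc(A,\,\text{stuff away from }\lamo)$, which misses $(x_0,\xi_0\dx)$ — here one uses that the \emph{other} FIO branch $\lamon$ and all artifact directions are genuinely separated from $(x_0,\xi_0\dx)$ precisely because $b-a<\pi$ and $\phi_0$ is interior. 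One must also verify that the symbol computation for $\Rstm P\Rmu$ picks up the value of the symbol of $P$ at the correct point $\lamo(x_0,\xi_0)$ and with a nonvanishing geometric (half-density/Jacobian) factor coming from the two-to-one nature of $\Pi_R$ — but since we have localized to the single branch $\lamo$, the relevant Jacobian factor from the clean-intersection composition is nonzero, and the $1/\sqrt{\text{something}}$ factors that appear are finite and nonzero there. Apart from this localization argument, the remaining steps are standard applications of the FIO calculus and of the hypotheses on $\mu$ and $P$.
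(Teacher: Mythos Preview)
Your approach is essentially the same as the paper's. For \eqref{equn:characterization} you follow the paradigm step by step exactly as the paper does, and for \eqref{elliptic containment} your $\psi$-decomposition is precisely the paper's argument: the paper takes $\psi=\vp(\phi)$, a smooth cutoff in $\phi$ alone supported in $(a,b)$ and equal to $1$ near $\phi_0$, writes $\chi_{(a,b)}=\vp+(\chi_{(a,b)}-\vp)$, and then invokes the explicit symbol computation of $\Rstm P\Kvp\Rmu$ (carried out in the next section, Theorem \ref{thm:reduction1} and Corollary \ref{cor:symbol elliptic}\ref{b-a}) to get ellipticity on the $\vp$-piece, while the $(\chi_{(a,b)}-\vp)$-piece is handled exactly as you describe via H\"ormander--Sato and the fact that both $\lamo$ and $\lamon$ lie outside its wavefront set.

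One point to tighten: your intermediate sentence that ``$\Rstm P\Rmuab$ agrees, microlocally near $(x_0,\xi_0\dx)$, with $\Rstm P\Rmu$'' is not correct as stated, because the full operator $\Rstm P\Rmu$ picks up \emph{both} branches $\lamo$ and $\lamon$ in its symbol at $(x_0,\xi_0\dx)$, and these could in principle cancel (this is exactly why the hypothesis $b-a<\pi$ is needed). You recognize and repair this in your ``main obstacle'' paragraph by localizing to a single branch with $\psi$; just be sure the final write-up compares $\Rstm P\Rmuab$ with $\Rstm P\psi\Rmu$ (one branch, hence elliptic by the hypothesis on $p$ and $\mu$) rather than with the full $\Rstm P\Rmu$. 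Also, in step \ref{paradigm:step4} the $\xi=0$ case gives $C^t\circ\WF(\chi_A)=\emptyset$ (since covectors in $C$ always have nonzero $\d s$-component), so it contributes nothing at all, not merely ``nothing new.''
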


The condition $b-a<\pi$ is reasonable in limited data problems because, if
$b-a\geq \pi$, then every line is parameterized by $L(\phi,s)$ for $\phi\in
(a,b)$.

Radon transforms detect singularities conormal to the set being
integrated over (e.g., \cite{GS1977,Palamodov,Quinto93}), and the
above theorem states this relation explicitly: only singularities
$(x,\alpha\th(\phi))\in\WF(f)$ with directions in the visible angular
range, $\phi\in(a,b)$, can be reconstructed from limited angle data.
Singularities of $f$ outside of $[a,b]$ are smoothed.  However, each
singularity of $f$ at $(x,\alpha\th(\phio))$ for $\phio=a,b$ generates
a line of artifacts through $x$ and normal to $\th(\phio)$.

\begin{proof}
We use the paradigm presented in Section \ref{sec:paradigm} to prove
\eqref{equn:characterization}.  By Proposition \ref{prop:canonical
relations}, we know that $\Rmu$ is a Fourier integral operator with
the canonical relation given in \eqref{def:C}. Thus,
the step \ref{paradigm:step1} of our paradigm is carried out.

For the step \ref{paradigm:step2}, we consider $A=(a,b)\times\R$ with
$0<a<b<\pi$ and compute
\begin{equation}
\label{eq:WF hard cutoff}
	\WF(\chi_{(a,b)\times\R}) = \sparen{((\phi,s);\beta\d\phi)\st \phi\in\sparen{a,b},\beta\neq0,s\in\R}.
\end{equation}
Since $\WF(\chi_{(a,b)\times\R})$ has no $\d s$-component and at the same time the $\d s$-component of $\WF(\Rmu f)$ is always non-zero, we see that the non-cancellation condition \eqref{non-cancellation}
holds. This is step \ref{paradigm:step3} of our paradigm.  Hence, by
Theorem \ref{thm:WF mult}, the product
\begin{equation}
	\Rmuab f = \chi_{(a,b)}\cdot\Rmu f
\end{equation}
is well-defined and   
\[\WF(\Rmuab f)\subset \Qc((a,b)\times\R,C\circ\WF(f)).\] 

In the next step (cf. \ref{paradigm:step4}), we calculate
$\Qc((a,b)\times\R,C\circ\WF(f))$ using \eqref{def:Q}.  

Since the condition [$\xi=0$ and $\eta=0$] is not allowed, the set 
$\Qc((a,b)\times\R,C\circ\WF(f))$ is a union of three sets:
\begin{multline}
\label{eq:wavefront set restricted Radon transform}
	\Qc((a,b)\times\R,C\circ\WF(f)) = \bparen{(C\circ\WF(f)) \cap
	\{((\phi,s),\eta)\in T^\ast (\Xi)\st \phi\in(a,b)\}}\\ \cup
	\WF(\chi_{(a,b)}) \cup \Wab(f),
\end{multline}
where the first set (in braces) corresponds to $\xi\neq 0$, $\eta=0$,
the second to $\xi=0$, $\eta\neq 0$ and the third, $\Wab(f)$,
corresponds to $\xi\neq 0$, $\eta\neq 0$ in the definition of $\Qc$.
To calculate $\Wab(f)$ note that covectors in $C\circ \WF(f)$ are of
the form $((\phi,s); \alpha(-\delta \d \phi + \d s))$ where there
exists an $x\in L(\phi,s)$ with $(x,\alpha\theta(\phi))\in \WF(f)$ and
where $\delta = x\cdot \thperp(\phi)$.  Also, $\eta\neq 0$
corresponds to covectors in $\WF(\chi_{(a,b)})$, which are of the form
$((\phi,s); \beta\d\phi)$ where $\phi\in \sparen{a,b}$ and $\beta\neq
0$.  Adding these vectors for the same base point, one sees that the
covector $((\phi,s); (\beta -\alpha\delta)\d\phi + \alpha\d s)$ is in
$\Wab(f)$.  Since $\beta$ is arbitrary, one can write 
\begin{multline}\label{Wab1}
  	\Wab(f) = \{ ((\phi,s); \nu\d\phi +\alpha\d s)\st \\
\nu\in \rr, \alpha\neq0, \phi\in\{a,b\}, \exists x\in L(\phi,s)\st
(x,\alpha\th(\phi))\in\WF(f)\}.
\end{multline}

To accomplish the step \ref{paradigm:step5} in our paradigm, we let $P$ be
a pseudodifferential operator.  Then, by containment \eqref{final WF
  containment},
\[\WF(\Rstm P\Rmuab f)\subset
C^t\circ\Qc((a,b)\times\R,C\circ \WF(f)).\]

We now compute $C^t\circ\Qc((a,b)\times\R,C\circ \WF(f))$. Using
\eqref{eq:wavefront set restricted Radon transform} and the
composition rules, first observe that
\begin{multline}
\label{eq:three parts of the wavefront set}
	C^t \circ \Qc((a,b)\times\R,C\circ \WF(f)) = C^t  \circ
	\big[(C\circ\WF(f)) \cap \{((\phi,s),\eta)\in T^\ast (\Xi)
\st \phi\in(a,b)\}\big]\\ 
	  \cup C^t  \circ \WF(\chi_{(a,b)})
	  \cup C^t  \circ \Wab(f) .
\end{multline}

We examine the three terms of the equation \eqref{eq:three parts of
the wavefront set} separately. First, we get
\begin{multline}
	C^t  \circ \big[(C\circ\WF(f)) \cap
	\{((\phi,s),\eta)\in T^\ast (\Xi)\st \phi\in(a,b)\}\big] \\
	=\big[(C^t  \circ C )\circ\WF(f))\big] \cap \big[C^t \circ
	\{((\phi,s),\eta)\in T^\ast (\Xi)\st \phi\in(a,b)\}\big].
\end{multline}
It is not hard to see that $C^t\circ C = \Delta :=\sparen{(x,\xi \d
x;x,\xi\d x)\st (x,\xi\d x)\in T^\ast\R^2}$ and
$\Delta\circ\WF(f)=\WF(f)$. Moreover, \[C^t \circ
\{((\phi,s),\eta)\in T^\ast (\Xi)\st \phi\in(a,b)\} = \Vc_{(a,b)}.\] Hence, the
first set in \eqref{eq:three parts of the wavefront set} is equal to
the set of visible singularities \eqref{def:WFab} \[\WF_{(a,b)}
(f)=\WF(f)\cap \Vc_{(a,b)}.\]

For the second set in \eqref{eq:three parts of the wavefront set}
observe that $C^t\circ \WF(\chi_{(a,b)})=\emptyset$ since the $\d
s$-components of covectors in $\WF(\chi_{(a,b)})$ is zero and the $\d
s$-components of covectors in $C^t$ is always non-zero. 

Finally, we consider the set $C^t\circ \Wab(f) $. Let \[\gamma = ((\phi,s);
\nu\d\phi+\alpha\d s)\in W_{\{a,b\}}(f),\] then $\nu \in \rr,\
\alpha\neq0$, $\phi\in\{a,b\}$, $s\in \rr$, and there is a $x\in
L(\phi,s)$ such that $(x,\alpha\th(\phi))\in\WF(f)$. 
By the definition of composition, \eqref{def:circ},
 \[C^t\circ \sparen{\gamma}=\sparen{(\tx,\alpha \th(\phi)\d x)\st
 (\tx,\alpha \th(\phi)\d x;\gamma)\in C^t}\] where, by the definition
of $C^t$, (\eqref{def:C} with the coordinates switched), $\tx\in
L(\phi,s)$ so $s=\tx\cdot \th(\phi)$.  Let $t=-\nu/\alpha$.  Since
$\nu$ is arbitrary, $t$ is arbitrary.  Again by the definition of
$C^t$, $t=-\nu/\alpha = \tx\cdot \thperp(\phi)$, so the point $\tx =
s\th(\phi) + (-\nu/\alpha)\thperp(\phi)$ is an arbitrary point in
$L(\phi,s)$.  Therefore, for any $\tx\in L(\phi,s)$, the covector
$(\tx,\alpha \th(\phi)\d x)\in C^t\circ \Wab(f)$. Thus, the third set
in \eqref{eq:three parts of the wavefront set} is the set of possible
added singularities given by \eqref{def:A}.

\medskip

Containment \eqref{elliptic containment} is proven using Corollary
\ref{cor:reduction2} from the next section.  Let $(x,\xi\dx) \in \WF(f)\cap
\Vc_{(a,b)}$.  Then, one of the angles $\phio(\xi)$ or $\phion(\xi)$
(defined in Proposition \ref{prop:canonical relations}) is in $(a,b)$ and
the other one is not since $b-a<\pi$.  Without loss of generality, assume
$\phio(\xi)\in (a,b)$.

Let $\vp$ be a cutoff function in $\phi$ that is supported in $(a,b)$
and equal to one in a smaller neighborhood $(a',b')$ of $\phi'$.  We
will define $\Kvp$ as the multiplication operator $\Kvp g(\phi,s) =
\vp(\phi)g(\phi,s)$.

Let $g_1=P \Kvp \Rmu(f)$ and $g_2 = P\bparen{\chi_{(a,b)}-\vp}
\Rmu(f)$.
By Corollary \ref{cor:symbol elliptic} part \ref{b-a}, the symbol of
$\Rstm P \Kvp \Rmu$ is elliptic on $\Vc_{(a',b')}$ and so at
$(x,\xi\dx)$.  Therefore, $(x,\xi\dx)\in \WF(\Rstm g_1 )$.  
We now show $(x,\xi\dx)\notin \WF\paren{\Rstm g_2}$.  Because
$\chi_{(a,b)}- \vp$ is zero on $(a',b')$, $\bparen{\chi_{(a,b)}-
\vp}\Rmu f$ is zero on $(a',b')\times \rr$.  Therefore, $g_2=
P\bparen{\chi_{(a,b)}- \vp} \Rmu(f))$ is smooth on $(a',b')\times
\rr$, and since $\phio(\xi)\in (a',b')$, $\lamo(x,\xi)\notin
\WF(g_2)$.  Since $b-a<\pi$, $\phion(\xi)\notin(a,b)$, so $g_2$ is
smooth near $\phion(\xi)$. This implies that $\lamon(x,\xi)\notin
\WF(g_2)$.  Using the H\"ormander-Sato Lemma \ref{thm:HS}, $\WF(\Rstm
g_2)\subset C^t\circ \WF (g_2)$, so, by \eqref{lambdas} the only two
covectors, $\lamo(x,\xi)$ and $\lamon(x,\xi)$, that can contribute to
wavefront of $\Rstm g_2$ at $(x,\xi\dx)$ are not in $\WF(g_2)$ so
$(x,\xi\dx)\notin \WF(\Rstm g_2)$.  

Therefore, $(x,\xi\dx)\in \WF(\Rstm g_1+\Rstm g_2)=WF(\Lvp f)$, and
this proves the final part of the theorem.
\end{proof}


\section{Reduction of artifacts}
\label{sec:reduction}

The singularity reduction method replaces the sharp cutoff $\chiab$ by
a smooth cutoff.  Let $\vp$ be a smooth cutoff function supported in
$(a,b)$ and equal to one on a proper subinterval $(a',b')$, and
replace $\chiab$ by $\vp$ in the reconstruction operator.  Then the
artifact-reduced reconstruction operator is \bel{def:L} \Lvp f = \Rstm
P\Kvp \Rmu f \ \text{ where } \ \Kvp g = \vp g.\ee This method was
analyzed for the lambda filter $P=-d^2/ds^2$ and the FBP filter
$P=\sqrt{-d^2/ds^2}$ and with $R_1$ in \cite{FrikelQuinto2013} and
with $\Rmu$ in \cite{Katsevich:1997,KLM}. Our theorems provide
generalization to arbitrary filters $P$, and they provide the symbol
of $\Lvp$ in general with proof.

\begin{theorem}\label{thm:reduction1}  
  Let $\mu$ be a smooth measure and let $\vp$ be a smooth function
  supported in $(a,b)$ and equal to $1$ on the proper subinterval
  $(a',b')$.  Then \bel{right reduction containment} \WF(\Lvp (f))\subset
  \WF_{(a,b)}(f).\ee The top order symbol of $\Lvp$ is
  \bel{symbol}\begin{aligned} \sigma(\Lvp)(x,\xi\dx) =&
    \frac{2\pi}{\norm{\xi}} \Big[
    \vp(\phio(\xi))p(\lamo(x,\xi))\mu^2(\phio(\xi),x)\\
    &\qquad+\vp(\phion(\xi))p(\lamon(x,\xi))\mu^2(\phion(\xi),x)
    \Big] \end{aligned}\ee where $P$ is a pseudodifferential operator on
  $\Ec'(\Xi)$ and the notation is given in \eqref{def:lambdas}.
\end{theorem}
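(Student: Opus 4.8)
The plan is to prove the two assertions of Theorem~\ref{thm:reduction1} in turn: first the wavefront containment \eqref{right reduction containment}, which is essentially a microlocal bookkeeping argument, and then the symbol formula \eqref{symbol}, which requires an honest stationary-phase/composition computation for the operator $\Lvp=\Rstm P\Kvp\Rmu$ as a pseudodifferential operator.

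For \eqref{right reduction containment}, I would argue as follows. Since $\vp$ is smooth, $\Kvp$ is multiplication by a smooth function, hence a pseudodifferential operator of order $0$ on $\Ec'(\Xi)$, so $\Kvp\Rmu$ is a Fourier integral operator with the same canonical relation $C$ as $\Rmu$ (given in \eqref{def:C}); composing with the pseudodifferential operator $P$ does not change the canonical relation, and composing with $\Rstm$ (associated to $C^t$) gives an operator whose canonical relation is contained in $C^t\circ C=\Delta$, the diagonal of $T^*(\rtwo)\smo$. Thus $\Lvp$ is (microlocally) a pseudodifferential operator on $\rtwo$ and $\WF(\Lvp f)\subset\Delta\circ\WF(f)=\WF(f)$ for all $f\in\Ec'(\rtwo)$. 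To get the sharper containment in $\Vab$, I would use the $\phi$-support of $\vp$: a covector $(x,\xi\dx)\in\WF(\Lvp f)$ can only arise from a covector $\lambda$ in $C\circ\WF(f)$ that lies over $\{\phi:\vp(\phi)\neq0\}\subset(a,b)$ and then is mapped back by $C^t$; by the two-preimage description in Proposition~\ref{prop:canonical relations} (the covectors $\lamo(x,\xi),\lamon(x,\xi)$), this forces $\phio(\xi)\in(a,b)$ or $\phion(\xi)\in(a,b)$, i.e.\ $(x,\xi\dx)\in\Vab$. Equivalently, one can simply observe $\WF(\Kvp\Rmu f)\subset\WF(\chi_{(a,b)}\Rmu f)$ already restricted to $\phi\in\supp\vp$, plug into the H\"ormander--Sato Lemma \eqref{thm:HS}, and read off that only visible singularities survive; this also re-proves the ``$\subset\WF_{(a,b)}(f)$'' part cleanly.

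For the symbol formula \eqref{symbol}, the approach is a direct computation. Because $\Pi_L$ is an injective immersion (the Bolker assumption) and $\mu$ is nowhere zero, $\Rmu$ is an elliptic FIO of order $-1/2$ (the standard order for the Radon line transform in the plane), and $\Rstm$ is elliptic of order $-1/2$ as well; the composition $\Rstm(P\Kvp)\Rmu$ is then a pseudodifferential operator on $\rtwo$ whose order is $2\,\mathrm{ord}(P)-1$ and whose principal symbol is computed by the standard ``clean composition'' formula: integrate the product of the half-densities/symbols of $\Rmu$, $P\Kvp$, and $\Rstm$ over the fiber of $\Pi_R$, which by Proposition~\ref{prop:canonical relations} consists of exactly the two points mapping to $(\lamo(x,\xi);x,\xi\dx)$ and $(\lamon(x,\xi);x,\xi\dx)$. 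Concretely, I would either (i) write $\Rmu$ explicitly as an oscillatory integral $\Rmu f(\phi,s)=\int e^{i\alpha(s-x\cdot\th(\phi))}\mu(\phi,x)f(x)\,\dx\,d\alpha$ (up to constants), do likewise for $\Rstm$, insert the symbol $p(\phi,s,\sigma)\vp(\phi)$ of $P\Kvp$, and carry out the stationary phase in the intermediate variables $(\phi,s,\alpha)$, which localizes to the two critical angles $\phi_0,\phi_1$; or (ii) quote the known symbol computation for $\Rstm\Rmu$ (e.g.\ from \cite{Quinto1980, Beylkin} or as in \cite{KLM}) and simply track how the extra factors $\vp(\phi)$ and $p(\lambda_i)$ ride along at each of the two stationary points. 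Either way one gets a sum of two terms, one for each preimage, each carrying a factor $\mu^2$ (one $\mu$ from $\Rmu$ at the critical point, one from $\Rstm$), a factor $\vp$ and a factor $p$ evaluated at $\lamo$ resp.\ $\lamon$, and a common Jacobian/normalization factor; the $2\pi/\norm{\xi}$ prefactor comes from the $d\phi$-integration giving $2\pi$-type constants together with the homogeneity degree $-1$ coming from the change of variables $\xi=\norm{\xi}\th(\phi)$ (this is exactly the elliptic normalization recorded in \eqref{def:lambdas}).

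The main obstacle is the symbol bookkeeping in step (ii)/(i): getting the constant $2\pi$, the power $\norm{\xi}^{-1}$, and the placement of $\mu^2$ exactly right requires carefully choosing the half-density (or $1$-density) factors on $C$ and $C^t$ in the FIO representations of $\Rmu$ and $\Rstm$ so that the composition formula produces symbols (not densities), and then evaluating the fiber integral at the two stationary points without dropping a Jacobian. A clean way to sidestep most of this is to note that $\vp$ and the symbol $p$ of $P$ are, microlocally near $(x,\xi\dx)$, just smooth multipliers ``pulled back along $\Pi_L$'', so $\sigma(\Lvp)$ must equal $\sigma(\Rstm\Rmu)$ with the integrand in its fiber-integral representation multiplied pointwise by $\vp(\phi)p(\phi,s,\sigma)$ evaluated on $C$; then one only has to know $\sigma(\Rstm\Rmu)$ — which is classical and equals $\frac{2\pi}{\norm{\xi}}\big[\mu^2(\phio(\xi),x)+\mu^2(\phion(\xi),x)\big]$ for the Radon line transform with weight $\mu$ — and decorate each of its two summands with the corresponding $\vp$ and $p$ factor, yielding \eqref{symbol}. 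I would present it this way: prove the composition is a $\Psi$DO and identify the two stationary points via Proposition~\ref{prop:canonical relations}, cite the classical symbol of $\Rstm\Rmu$, and then insert the multiplier factors, with the stationary-phase constant-chasing relegated to a remark or to the cited references.
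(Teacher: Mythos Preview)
Your proposal is correct and follows essentially the same route as the paper: both argue that $\Lvp$ is a pseudodifferential operator because $C^t\circ C=\Delta$, compute its principal symbol as the sum of $\sigma(\Rstm)\cdot\sigma(P\Kvp)\cdot\sigma(\Rmu)$ over the two $\Pi_R$-preimages $(\lamo(x,\xi);x,\xi\dx)$ and $(\lamon(x,\xi);x,\xi\dx)$ from Proposition~\ref{prop:canonical relations}, and obtain the prefactor $2\pi/\norm{\xi}$ by invoking the classical half-density computation for the Radon line transform (the paper cites \cite[Theorem~3.1 and Lemma~3.2]{Quinto1980} explicitly, which is exactly your ``option (ii)''). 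One small slip worth fixing: the order of $\Lvp$ is $\mathrm{ord}(P)-1$, not $2\,\mathrm{ord}(P)-1$, since each of $\Rmu$ and $\Rstm$ contributes order $-1/2$; this does not affect the argument.
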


If $\nu$ is a smooth weight and $\Rstm$ is replaced by $R_{\nu}^*$,
then the $\mu$ factor in \eqref{symbol} is replaced by $\nu \mu$.

\begin{corollary}\label{cor:reduction2}   Let $\vp$
be a nonnegative smooth function supported on $(a,b)$ and equal to $1$
on a subinterval $(a',b')$.  Assume the symbol $\sigma(\Lvp)$ in
\eqref{symbol} is nowhere zero modulo lower order symbols.  Then,
\bel{left reduction containment}\WF_{(a',b')}(f) \subset \WF(\Lvp
(f)).\ee 
\end{corollary}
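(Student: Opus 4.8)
The plan is to combine Theorem \ref{thm:reduction1} with the standard fact that an elliptic pseudodifferential operator does not destroy wavefront set. Let $(x,\xi\dx)\in\WF_{(a',b')}(f)$, so $(x,\xi\dx)\in\WF(f)$ and $\phi_0(\xi)\in(a',b')$ or $\phi_1(\xi)\in(a',b')$; since $b-a<\pi$ (and a fortiori $b'-a'<\pi$) at most one of the two angles lies in $(a',b')$, and on $(a',b')$ we have $\vp\equiv 1$. The point is to show $(x,\xi\dx)\in\WF(\Lvp f)$. By Theorem \ref{thm:reduction1}, $\Lvp$ is a pseudodifferential operator on $\Ec'(\rtwo)$ with top order symbol $\sigma(\Lvp)$ given by \eqref{symbol}. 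First I would verify that, at a covector $(x,\xi\dx)$ with (say) $\phi_0(\xi)\in(a',b')$, the hypothesis that $\sigma(\Lvp)$ is nowhere zero modulo lower order symbols makes $\Lvp$ elliptic at $(x,\xi\dx)$: indeed the $\vp$-factors are $1$ there, so the only way the symbol could vanish would be cancellation between the two bracketed terms, which is exactly what the ``nowhere zero modulo lower order'' assumption rules out on the relevant conic set.

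Next I would invoke microlocal elliptic regularity for pseudodifferential operators: if $Q$ is a $\Psi$DO that is elliptic at $(x,\xi\dx)$, then $(x,\xi\dx)\in\WF(Qf)$ if and only if $(x,\xi\dx)\in\WF(f)$ (see e.g.\ \cite[Ch.\ VI]{Treves:1980vf} or \cite{Hoermander03}). Applying this with $Q=\Lvp$ and using $(x,\xi\dx)\in\WF(f)$ gives $(x,\xi\dx)\in\WF(\Lvp f)$. Since $(x,\xi\dx)$ was an arbitrary element of $\WF_{(a',b')}(f)$, this yields \eqref{left reduction containment}. Conversely, the reverse containment $\WF(\Lvp f)\subset\WF_{(a,b)}(f)$ is precisely \eqref{right reduction containment} from Theorem \ref{thm:reduction1}, so on the subcone over $(a',b')$ the two bounds pinch and $\Lvp$ reproduces exactly the visible wavefront set directions in $(a',b')$.

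The one genuinely delicate point — and the step I expect to be the main obstacle — is the transition from ``$\sigma(\Lvp)$ nowhere zero modulo lower order symbols'' to genuine microlocal ellipticity in the sense needed to apply the elliptic regularity theorem. One must check that the lower order terms cannot conspire to make the full symbol vanish on a conic neighborhood of $(x,\xi\dx)$; this is where the homogeneity of the top order symbol of a $\Psi$DO is used, together with the fact that $\|\xi\|\to\infty$ along the cone so the top order part dominates. It is worth remarking that the hypothesis is stated for $\vp$ \emph{nonnegative}: this guarantees that on the part of the cone where \emph{both} $\phi_0(\xi)$ and $\phi_1(\xi)$ happen to lie in $(a,b)$ the two terms in \eqref{symbol} add rather than cancel (their signs are governed by $p$ at $\lamo$ and $\lamon$, but the $\vp\mu^2$ weights are nonnegative), which is what makes the ``nowhere zero'' assumption plausible and, in the applications, verifiable. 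Apart from this, everything is routine $\Psi$DO calculus once Theorem \ref{thm:reduction1} is in hand.
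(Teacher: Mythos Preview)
Your approach is correct and coincides with the paper's own proof, which is a one-line appeal to microlocal elliptic regularity for pseudodifferential operators (the paper cites \cite[Prop.\ 6.9]{Treves:1980vf}): since Theorem \ref{thm:reduction1} shows $\Lvp$ is a $\Psi$DO and the hypothesis gives ellipticity, $\WF(f)$ is preserved at every covector where the symbol is elliptic. One small remark: you invoke $b-a<\pi$, but this is not a hypothesis of Corollary \ref{cor:reduction2} and is not needed here, since ellipticity of $\sigma(\Lvp)$ is assumed outright rather than deduced from the structure of \eqref{symbol}; that structural analysis is the content of Corollary \ref{cor:symbol elliptic}, not of this corollary.
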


This theorem shows that as long as $P$ is well-chosen, most visible
wavefront directions (those in $\WF_{(a',b')}(f)$) are visible using the
artifact reduced operator $\Lvp$ and artifacts are not added since $\WF(
\Lvp(f))$ is contained in $\WF_{(a,b)}(f)$.  The proof follows from the
ellipticity assumption in the corollary using, e.g., \cite[Prop.\
6.9]{Treves:1980vf}.

Our next corollary provides specific cases in which the theorem can be
applied.

\begin{corollary}\label{cor:symbol elliptic}
Let $\vp$ be a nonnegative function supported in $(a,b)$ and equal to
$1$ on the subinterval $(a',b')$.  Let \[\Ac=\sparen{(\phi,s,\alpha[t\d
\phi + \d s])\st \phi\in (a',b'), s\in \rr, t\in \rr, \alpha \neq 0}.\]

Then $\Lvp =\Rstm \Kvp P \Rmu$ is elliptic on $\Vc_{(a',b')}$ (therefore
\eqref{left reduction containment} holds) when either of the following
conditions hold for $\mu$ and $P$:

\begin{enumerate} 

\item\label{same sign}  $\mu$ is real and nowhere zero and the
top order symbol $\sigma(P)=p$ is real and nonzero on $\Ac$, or

\item\label{b-a}  $b-a<\pi$ and $\mu$ is nowhere zero and 
$p$ is elliptic on $\Ac$.

\end{enumerate}
\end{corollary}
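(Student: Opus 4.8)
The strategy is to deduce the ellipticity of $\Lvp=\Rstm\Kvp P\Rmu$ on $\Vc_{(a',b')}$ directly from the symbol formula \eqref{symbol} in Theorem \ref{thm:reduction1}, and then invoke Corollary \ref{cor:reduction2} to conclude \eqref{left reduction containment}. So the real content is to verify, in each of the two listed cases, that the bracketed expression in \eqref{symbol} cannot vanish when $(x,\xi\dx)\in\Vc_{(a',b')}$. First I would fix $(x,\xi\dx)\in\Vc_{(a',b')}$, so that $\xi=\norm{\xi}\th(\phi)$ for some $\phi\in(a',b')$; by the definitions of $\phio(\xi)$ and $\phion(\xi)$ in Proposition \ref{prop:canonical relations}, exactly one of these two angles lies in $(a',b')$ when $b-a<\pi$ (the other differs by $\pi$ and hence falls outside an interval of length less than $\pi$), whereas for a general interval both could in principle lie in $(a',b')$. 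On the subinterval $(a',b')$ the cutoff satisfies $\vp\equiv 1$, so for the angle(s) in $(a',b')$ the corresponding factor $\vp(\phi_j(\xi))$ equals $1$, and the covectors $\lamo(x,\xi)$, $\lamon(x,\xi)$ lie in the set $\Ac$ on which $p$ is assumed nonzero (or real and nonzero).

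In case \ref{same sign}, both $\vp$ and $\mu^2$ are nonnegative, $\vp(\phi_j(\xi))$ is $1$ for at least one $j$, and $\mu^2(\phi_j(\xi),x)>0$ since $\mu$ is nowhere zero; the two summands $\vp p\mu^2$ are then real numbers of the same sign (the common sign of $p$ on $\Ac$), with at least one of them bounded away from $0$. Hence no cancellation can occur and the bracket in \eqref{symbol} is nonzero; dividing by the nonvanishing factor $2\pi/\norm{\xi}$ shows $\sigma(\Lvp)(x,\xi\dx)\neq 0$, i.e. $\Lvp$ is elliptic on $\Vc_{(a',b')}$. In case \ref{b-a}, the hypothesis $b-a<\pi$ guarantees that only one of $\phio(\xi)$, $\phion(\xi)$ lies in $(a,b)$, and since $\vp$ is supported in $(a,b)$ the other summand in \eqref{symbol} vanishes identically; the surviving summand is $1\cdot p(\lambda_j(x,\xi))\mu^2(\phi_j(\xi),x)$ with $p$ elliptic on $\Ac$ and $\mu$ nowhere zero, hence nonzero. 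Again the bracket is nonvanishing and $\Lvp$ is elliptic on $\Vc_{(a',b')}$.

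Once ellipticity on $\Vc_{(a',b')}$ is established, the containment \eqref{left reduction containment} is immediate: by Theorem \ref{thm:reduction1} the operator $\Lvp$ is (microlocally) a pseudodifferential operator whose top order symbol is nonvanishing on $\Vc_{(a',b')}$, so for $(x,\xi\dx)\in\WF_{(a',b')}(f)=\WF(f)\cap\Vc_{(a',b')}$ the standard ellipticity argument for pseudodifferential operators (e.g., \cite[Prop.\ 6.9]{Treves:1980vf}, exactly as used in the proof of Corollary \ref{cor:reduction2}) gives $(x,\xi\dx)\in\WF(\Lvp f)$.

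I expect the main obstacle — really the only subtle point — to be bookkeeping with the two angles $\phio(\xi)$ and $\phion(\xi)$: in case \ref{same sign} one must argue that even if \emph{both} angles contribute (when $b-a\geq\pi$), the same-sign property of $p$ together with nonnegativity of $\vp$ and $\mu^2$ prevents cancellation, whereas in case \ref{b-a} one exploits $b-a<\pi$ precisely to kill one of the two terms so that no sign hypothesis on $p$ is needed. Care is also needed to check that $\lamo(x,\xi)$ and $\lamon(x,\xi)$, as written in \eqref{def:lambdas}, indeed have the form $(\phi,s,\alpha[t\d\phi+\d s])$ with the relevant $\phi$ in $(a',b')$, so that they actually lie in the set $\Ac$ where the symbol hypotheses on $P$ apply; this is a direct comparison of \eqref{def:lambdas} with the definition of $\Ac$ and presents no real difficulty.
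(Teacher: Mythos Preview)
Your proposal is correct and follows essentially the same argument as the paper: fix $(x,\xi\dx)\in\Vc_{(a',b')}$, use the symbol formula \eqref{symbol}, and in case~\ref{same sign} rule out cancellation by a sign argument on the two summands, while in case~\ref{b-a} use $b-a<\pi$ to kill one summand outright. Your treatment of case~\ref{b-a} is in fact slightly sharper than the paper's, since you correctly observe that $\phion(\xi)\notin(a,b)\supset\supp\vp$ (not merely $\phion(\xi)\notin(a',b')$), which is what is actually needed to conclude that the second term vanishes.
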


Condition \ref{same sign} holds, for example, if $P=-d^2/ds^2$, the
filter in Lambda tomography, or $P=\sqrt{-d^2/ds^2}$, the filter in
FBP because, in both cases, the symbol is positive on $\Ac$ (e.g.,
$\sigma(\sqrt{-d^2/ds^2})(\phi,s,\beta\d\phi + \alpha\d
s)=\abs{\alpha}$), and our theorem can be applied to these operators.

If $b-a<\pi$ and $P=d/ds$, then condition \ref{b-a} holds since the
symbol of $d/ds$ is nowhere zero on $\Ac$.  Thus, $\Lvp$ is elliptic
on $\Vc_{(a',b')}$.  However, if $b-a>\pi$, ellipticity of $P$ is not
sufficient for ellipticity of $\Lvp$.  For example, consider the full
data problem for the classical transform $R_1$, then
$\sigma(P)(\phi,s,\beta\d\phi+\alpha\d s) = \alpha $ changes sign on
$\Ac$ and the operator $R_1^*(d/ds R_1)=0$ by symmetry.

\begin{proof}[Proof of Theorem \ref{thm:reduction1}]
  We use the notation, conventions, and symbol calculation in \cite[Theorem
  3.1]{Quinto1980}.  Recall that $\Pi_R:C\to T^*(\rtwo)$ and $\Pi_L:C\to
  T^*(\Xi)$ are the natural projections.  Equation (14) in
  \cite{Quinto1980} and the discussion below it give the symbol of $\Rmu$
  as the half density \bel{symbol
    Rmu}\sigma(\Rmu)=\frac{(2\pi)^{1/2}\mu(\phi,x)d \phi\, d x\,
    \sqrt{dw\,d\eta}}{\sqrt{d\phi\, d s \, d x
    }\,\Pi_R^*(\abs{\sigma_\rtwo})}\ee where $\abs{\sigma_\rtwo}$ is the
  density from the canonical symplectic form on $T^*(\rtwo)$ and
  $\Pi_R^*(\abs{\sigma_\rtwo})$ is its pull back to $C$.  Also,
  $Z=\sparen{(\phi,x\cdot \th(\phi),x)\st \phi\in [0,2\pi), x\in \rtwo}$ is
  the set in $\Xi\times \rtwo$ over which the Schwartz kernel of $\Rmu$
  integrates, and $z=(\phi,x\cdot\th(\phi),x)$ and $w=x\cdot\th(\phi)-s$
  give coordinates on $\Xi\times \rtwo$. Then, the \emph{measure} on $Z$
  associated to $\Rmu$ is $\mu(\phi,x)d\phi\,d x $ (see equation (16) in
  \cite{Quinto1980}).  Finally $\eta$ is the fiber coordinate in the
  conormal bundle of $Z$.  An analogous argument shows that the symbol of
  $\Rstm$ is given by \bel{symbol
    Rstm}\sigma(\Rstm)=\frac{(2\pi)^{1/2}\mu(\phi,x)d \phi\, d x
    \sqrt{dw\,d\eta}}{\sqrt{ d\phi\, d s\, d
      x}\,\Pi_L^*(\abs{\sigma_\Xi})}.\ee
The pseudodifferential operator $P\Kvp$ has symbol
$\vp(\phi)p(\phi,s,\gamma)$ (where $\gamma\in T^*_{(\phi,s)}(\Xi)$) so
$P\Kvp \Rmu$ is a standard smooth FIO and its top order symbol is
\[\sigma(P \Kvp
\Rmu)=\frac{(2\pi)^{1/2}p(\phi,s,\gamma)\vp(\phi)\mu(\phi,x)d \phi\, d x
\sqrt{dw\,d\eta}}{\sqrt{d\phi \,d s \,d x }\,\Pi_R^*(\abs{\sigma_\rtwo})}\] when evaluated at
covectors on $C$.

 Let $(x,\xi\dx)\in T^*(\rtwo)\smo$.  To calculate the symbol of the
composition of $\Rstm$ with $P\Kvp \Rmu$ one uses the note at the top
of p.\ 338 of \cite{Quinto1980}: since the projection $\Pi_R:C\to
T^*(\rtwo)\smo$ is two-to-one, the symbol of $\Rstm P \Kvp \Rmu$ at
$(x,\xi\dx)\in T^*(\rtwo)$ is the sum of the product
$\sigma(\Rstm)\cdot\sigma(P\Kvp \Rmu)$ at the two preimages.  By
Proposition \ref{prop:canonical relations}, those preimages, given by
$\Pi_R\inv(x,\xi\dx)$, are the two covectors
\[(\lamo(x,\xi);x,\xi\dx)\\ \text{ and }\\
(\lamon(x,\xi);x,\xi\dx).\]  

Under the conventions of \cite{Quinto1980}, the symbol of $\Rstm P\Kvp
\Rmu$ at $(x,\xi\dx)$ is the sum
\bel{symbol-start}\begin{aligned}\sigma(\Rstm P \Kvp \Rmu)&(x,\xi\dx)
= \sparen{\frac{2\pi (d \phi\, \d x)^2 d w\,
d\eta}{d \phi\, d s\,d x\,\Pi_R^*(\abs{\sigma_\rtwo})\,\Pi^*_L(\abs{\sigma_\Xi})}}\\
&\quad\times 
\big[\vp(\phio(\xi))\mu^2(\phio(\xi),x)p(\lamo(x,\xi))\\
&\quad\quad+\vp(\phion(\xi))\mu^2(\phion(\xi),x)p(\lamon(x,\xi))\big]\end{aligned}\ee
Now, \cite[Lemma 3.2]{Quinto1980} shows, for the Radon line transform,
that the term on the top right in braces in \eqref{symbol-start} can
be  simplified to equal to $2\pi/\norm{\xi}$.  Putting this
into \eqref{symbol-start} proves the symbol calculation
\eqref{symbol}.
\end{proof}

\begin{proof}[Proof of Corollary \ref{cor:symbol elliptic}]
In each case, we will show that $\sigma(\Lvp)$ is elliptic on
$\Vc_{(a',b')}$.  Let $(x,\xi\dx)\in \Vc_{(a',b')}$, then either
$\phio(\xi)$ or $\phion(\xi)$ or both are in $(a',b')$. Without loss
of generality, we assume $\phio(\xi)\in (a',b')$.  Therefore,
$\vp(\phio(\xi))=1$.  

In case \ref{same sign} we assume $\mu$ is real and nowhere zero and
the top order symbol of $P$, $\sigma(P)=p$, is real and nowhere zero
on $\Ac$.  Therefore $p$ is either always positive or always negative
on $\Ac$.  Since $\vp=1$ on $(a',b')$ and $\mu^2>0$, at least the
first term in brackets in \eqref{symbol} (the one containing $\phio$)
is nonzero.  The second term (containing $\phion(\xi)$) either has the
same sign as this term (since the sign of $p$ does not change) or is
zero (if $\phion(\xi)\notin \supp(\vp))$.  Therefore the sum is
nonzero and so the symbol of $\Lvp$ is elliptic on $\Vc_{(a',b')}$.  

In case \eqref{b-a}, since $b-a<\pi$ and $\phio(\xi)\in (a',b')$,
$\phion(\xi)\notin (a',b')$.  Therefore, only one term in brackets in
\eqref{symbol} is nonzero.  Therefore, the symbol is elliptic on
$\Vc{(a',b')}$.
\end{proof}

\section*{Acknowledgments} The authors thank Frank Filbir for
encouraging this collaboration.  They thank Adel Faridani and
Alexander Katsevich for pointing out important references.  The second author is supported by NSF grant DMS 1311558

\bibliographystyle{AIMS}
\bibliography{references}
\end{document}